\theoremstyle{plain}
\newtheorem*{theorem*}{Theorem}
\newtheorem*{lemma*} {Lemma}
\newtheorem*{corollary*} {Corollary}
\newtheorem*{proposition*}{Proposition}
\newtheorem*{conjecture*}{Conjecture}
\newtheorem{theorem}{Theorem}[section]
\newtheorem{lemma}[theorem]{Lemma}
\newtheorem*{theorem1*}{Theorem 1}
\newtheorem*{theorem2*}{Theorem 2}
\newtheorem*{theorem3*}{Theorem 3}
\newtheorem{proposition}[theorem]{Proposition}
\newtheorem{conjecture}[theorem]{Conjecture}
\newcommand\rg{\mathrm{rg}\,}
\newcommand\hg{\mathrm{hg}\,}
\newcommand{\co}{\colon \thinspace}
\theoremstyle{remark}
\newtheorem*{remark}{Remark}
\newtheorem{example*}{Example}
\newtheorem*{claim}{Claim}
\theoremstyle{definition}
\def\op{\operatorname}
\def\G{\Gamma}
 \def\Q{\Bbb{Q}}  \def\Z{\Bbb{Z}}  
\def\N{\Bbb{N}}   \def\ll{\langle} \def\rr{\rangle}
\def\vol{\op{Vol}} \def\bp{\begin{pmatrix}}
\def\sm{\setminus} \def\ep{\end{pmatrix}} \def\bn{\begin{enumerate}} 
   \def\en{\end{enumerate}}
\def\ba{\begin{array}} \def\ea{\end{array}}  
     \def\wti{\widetilde}
\def\ker{\op{ker}}\def\be{\begin{equation}} \def\ee{\end{equation}} 
 \def\hom{\op{Hom}}  
 \def\dim{\op{dim}}
    \def\rk{\op{rk}}
\newcommand{\smfrac}[2]{\mbox{\footnotesize$\displaystyle\frac{#1}{#2}$}} 
\def\ol{\overline}
\def\wti{\widetilde}
\def\what{\widehat}
\def\F{\Bbb{F}}
\def\rg{\op{rg}}
\def\rk{\op{rk}}
\def\hg{\op{hg}}
\begin{document}
\title{Epimorphisms of 3-manifold groups}
\author{Michel Boileau}
\address{
Aix-Marseille Universit\'e, CNRS, Centrale Marseille, I2M, UMR 7373,
13453 Marseille, France}
\email{michel.boileau@cmi.univ-mrs.fr }

\author{Stefan Friedl}
\address{Fakult\"at f\"ur Mathematik\\ Universit\"at Regensburg\\   Germany}
\email{sfriedl@gmail.com}

\begin{abstract}
Let $f\colon M\to N$ be a proper map between two aspherical compact orientable 3-manifolds with empty or toroidal boundary. We assume that $N$ is not a closed graph-manifold. Suppose that $f$  induces an epimorphism on fundamental groups.  We show that $f$ is homotopic to a homeomorphism if one of the following holds:
either for any finite-index subgroup $\Gamma$ of $\pi_1(N)$ the ranks of $\Gamma$ and of $f_*^{-1}(\Gamma)$ agree, or for any finite cover $\wti{N}$ of $N$ the Heegaard genus of $\wti{N}$ and the Heegaard genus of the pull-back cover $\wti{M}$ agree.
\end{abstract}

\keywords{3-manifolds, degree-one maps, Heegaard genus}  \subjclass[2010]{57M10, 57M27}

\maketitle

\section{Introduction}
Let $f\colon M\to N$ be a map between two 3-manifolds.
(Here and throughout the paper all 3-manifolds are understood to be connected, compact and  orientable.)
We say $f$ is \emph{proper} if $f(\partial M)\subset \partial N$. We say $f$ is a \emph{$\pi_1$-epimorphism} if the induced map $f_*\colon \pi_1(M)\to \pi_1(N)$ is an epimorphism. Finally $f$ is called a  \emph{degree-one map} if it is proper and if the induced map on homology $f_*\colon H_3(M,\partial M;\Z)\to H_3(N,\partial N;\Z)$ is an isomorphism. 
  It is well-known, see e.g.\ \cite[Lemma~15.12]{He76}, that a degree-one map is a $\pi_1$-epimorphism.

Given a map $f\colon M\to N$ between two 3-manifolds that is either a $\pi_1$-epimorphism or a degree-one map  it is a long-standing question to find out what extra conditions  ensure that $f$ is in fact homotopic to a homeomorphism.
For example, a celebrated theorem of M.\ Gromov and W.\ Thurston says that if $f\colon M\to N$ is a degree-one map between two  hyperbolic 3-manifolds of the same  volume, then $f$ is homotopic to a homeomorphism, \cite[Chapter~6]{Th79}, see also \cite{BCG95}. This result has been generalized in many directions, see e.g.\ \cite{Som95,Der10,Der12}, by using the  Gromov simplicial volume \cite{Gr82}.

For an aspherical 3-manifold $M$ the volume $\vol(M)$ is defined as the sum of the volumes of the hyperbolic pieces of the geometric decomposition of $M$. In the following we say that a map $f\colon M\to N$ between manifolds is a \emph{$\Z$-homology equivalence}, if $f$ induces isomorphisms of all homology groups. We recall the following theorem of B.\ Perron-P.\ Shalen and P.\ Derbez.

\begin{theorem}\label{thm:ps99}
Let $f\colon M\to N$ be a map between  two 
closed,  aspherical 3-manifolds with the same  volume.
If  for any finite covering $\wti{N}\to N$ $($not necessarily regular$)$ the induced map $\wti{f}\colon\wti{M}\to \wti{N}$ is a $\Z$-homology equivalence, then $f$ is homotopic to a homeomorphism.
\end{theorem}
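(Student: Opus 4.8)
The plan is to reduce the statement to the injectivity of $f_*$ on fundamental groups and then invoke topological rigidity. First I would observe that, taking $\wti N=N$ to be the trivial cover, the hypothesis already forces $f_*\colon H_3(M;\Z)\to H_3(N;\Z)$ to be an isomorphism; since $M$ and $N$ are closed and orientable this means $\deg f=\pm 1$, and a degree-one map is a $\pi_1$-epimorphism. Because $M$ and $N$ are aspherical they are Eilenberg--MacLane spaces, so $f$ is a homotopy equivalence precisely when $f_*\colon\pi_1(M)\to\pi_1(N)$ is an isomorphism; and by the topological rigidity of closed aspherical $3$-manifolds (Waldhausen in the Haken case, together with geometrization) a homotopy equivalence is homotopic to a homeomorphism. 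Thus the whole theorem comes down to showing that the surjection $f_*$ has trivial kernel.

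To get a grip on the kernel I would bring in the Gromov simplicial volume $\|\cdot\|$, which for aspherical $3$-manifolds is proportional to $\vol$ and is additive over the pieces of the geometric decomposition, the Seifert fibered pieces contributing zero. Monotonicity of $\|\cdot\|$ under the degree-one map gives $\|M\|\ge\|N\|$, while equality of volumes gives $\|M\|=\|N\|$, so we are in the equality case. Here I would use the analysis of Soma and of Derbez of maps realizing equality in the Gromov norm: after a homotopy, $f$ can be arranged to carry the union of hyperbolic JSJ-pieces of $M$ homeomorphically onto the union of hyperbolic pieces of $N$, sending JSJ-tori to JSJ-tori, by an application of Mostow--Gromov--Thurston rigidity to each hyperbolic piece. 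On the hyperbolic part, therefore, $f_*$ is already an isomorphism.

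The main obstacle, and the reason for the finite-cover hypothesis, lies entirely in the Seifert fibered (zero-volume) part of the geometric decomposition, which the Gromov norm cannot see. On these pieces I would exploit the homology condition as follows: for a finite-index subgroup $H\le\pi_1(N)$ the pull-back cover corresponds to $f_*^{-1}(H)$, the covers $\wti M\to M$ and $\wti N\to N$ have the same number of sheets, and $\wti f$ is again a degree-one map carrying the same kernel $\ker f_*$ as $f$. The hypothesis that every such $\wti f$ is a $\Z$-homology equivalence means in particular that $b_1(\wti M)=b_1(\wti N)$ and that the torsion of $H_1$ agrees for all pull-back covers. I would then argue that a nontrivial kernel, which cannot be detected on the hyperbolic part where $f$ is already a homeomorphism, would survive into the first homology of a suitably chosen pull-back cover and produce a discrepancy in $b_1$ or in the torsion of $H_1$, contradicting the hypothesis; this forces $\ker f_*=1$ and completes the proof.

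I would expect this last step to be the genuinely hard part: translating a nontrivial kernel supported on the graph part into a concrete homological discrepancy in some finite cover requires a careful choice of $H$ adapted to the Seifert structure (for instance one detecting the base orbifold or a regular fiber), and the bookkeeping of how $\ker f_*$ distributes among the Seifert pieces and their incident tori is where the real work of Perron--Shalen and Derbez is concentrated.
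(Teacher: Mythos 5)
The paper itself offers no proof of this theorem: it is recalled from the literature, with the closed graph manifold case attributed to Perron--Shalen \cite{PS99} and the remaining aspherical cases to Derbez \cite{Der03} together with geometrization. So your proposal can only be measured against the architecture of those cited arguments, and at the top level it does reproduce it correctly: degree one from the trivial cover, hence $\pi_1$-surjectivity; asphericity plus topological rigidity reduces everything to injectivity of $f_*$; and equality in the Gromov norm combined with the rigidity results of Soma \cite{Som95} and Derbez lets one homotope $f$ to a homeomorphism over the hyperbolic part of the geometric decomposition.

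The genuine gap is the one you flag yourself: the assertion that a nontrivial $\ker f_*$ ``supported on the graph part'' must produce a discrepancy in $b_1$ or in the torsion of $H_1$ of a suitably chosen pull-back cover is not an argument but a restatement of what needs to be proved, and it is where essentially all of \cite{PS99} and \cite{Der03} lives. Two concrete reasons it cannot be waved through. First, the finite-cover homology hypothesis is genuinely weak on its own: the paper's remark points to \cite{BW96} for closed aspherical $M$, $N$ with $\vol(M)>\vol(N)$ admitting a map that is a $\Z$-homology equivalence in every finite cover; so homology of finite covers alone can never certify injectivity of $f_*$, and the volume hypothesis must re-enter the graph-part analysis (for instance to control degrees on Seifert pieces, their fibers and base orbifolds), rather than being spent once on the hyperbolic pieces. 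Second, $\ker f_*$ has no reason to be ``carried by'' the Seifert pieces in any clean sense: a priori $f$ need not respect the JSJ decompositions at all, and putting it into a standard form adapted to both decompositions, then tracking how the kernel interacts with regular fibers and gluing tori so as to manufacture the right finite-index subgroup $H\le\pi_1(N)$, is precisely the delicate bookkeeping your sketch defers. As written, the proposal is a correct reduction plus an accurate table of contents for the missing proof, not a proof.
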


\begin{remark}\mbox{}
\bn
\item The case of closed graph manifolds was dealt with by B.\ Perron-P.\ Shalen~\cite{PS99}, see also \cite{AF11}. 
\item P.\ Derbez \cite{Der03} stated the theorem for Haken 3-manifolds that are not graph manifolds. The proof of P.\ Derbez applies to all closed 3-manifolds, that are not graph manifolds, for which the Geometrization Conjecture is known. In particular by the work of  G.\ Perelman the theorem applies to all aspherical 3-manifolds that are not graph manifolds.
\item The original theorem of P.\ Derbez is formulated in terms of the simplicial volume, but the simplicial volume for 3-manifolds agrees, up to a fixed constant, with the volume. See e.g.\ \cite{BP92} for details.
\item 
The conclusion of the theorem does not hold without the assumption on the simplicial volume for in \cite{BW96} an example of two 
closed aspherical 3-manifolds $M$ and $N$ is given, such that for any finite covering $\wti{N}\to N$  the induced map $\wti{f}\colon\wti{M}\to \wti{N}$ is a $\Z$-homology equivalence, but such that 
 $\vol(M)>\vol(N)$.
\en
\end{remark}

%

In the following recall that a subgroup $\Gamma\subset \pi$ is called \emph{subnormal} if there exists a chain of subgroups $\pi=\Gamma_0\supset \Gamma_1\supset \dots\supset \Gamma_k=\Gamma$, such that each $\Gamma_i$ is normal in $\Gamma_{i-1}$. 
Given a finitely generated group $\pi$ we denote by $\op{rk}(\pi)$ its rank, i.e.\ the smallest cardinality of a generating set of $\pi$. If $f\colon M\to N$ is a $\pi_1$-epimorphism, then given any finite-index subgroup $\Gamma$ of $\pi_1(N)$ the map $f_*$ induces an epimorphism $f_*^{-1}(\Gamma)\to \Gamma$, in particular the inequality
\[ \rk(f_*^{-1}(\Gamma))\,\geq\, \rk(\Gamma)\]
holds. 
Our first theorem says that equality holds for all finite-index $\Gamma$'s only if $f$ is homotopic to a homeomorphism.

\begin{theorem}\label{mainthm}
Let $f\colon M\to N$ be a proper map between two   aspherical 3-manifolds with empty or toroidal boundary. We assume that $N$  is not a closed graph manifold.
If $f$ is a $\pi_1$-epimorphism and if for every  finite-index subnormal subgroup $\Gamma$ of $\pi_1(N)$ the equality 
\[ \rk\big((f_*)^{-1}(\Gamma)\big)\,=\,\rk(\Gamma)\]
holds, then $f$ is  homotopic to  a homeomorphism.
\end{theorem}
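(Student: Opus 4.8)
The plan is to verify the hypotheses of the Perron--Shalen--Derbez theorem (Theorem~\ref{thm:ps99}), suitably adapted to the toroidal boundary setting. That theorem requires two inputs: that the induced map $\wti f\colon\wti M\to\wti N$ is a $\Z$-homology equivalence for every finite cover $\wti N\to N$, and that $\vol(M)=\vol(N)$. The hypothesis that $N$ is not a closed graph manifold guarantees $\vol(N)>0$, so that the geometric decomposition contains a hyperbolic piece and the volume is a genuine rigidity invariant; the graph manifold case is precisely the one treated separately by Perron--Shalen in \cite{PS99}. To pass from the closed formulation to manifolds with toroidal boundary I would double $M$ and $N$ along their boundaries, check that $Df\colon DM\to DN$ remains a $\pi_1$-epimorphism and inherits the rank equalities on finite-index subnormal subgroups, verify the hypotheses there, and transport the conclusion back across the doubling.

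The unifying device for producing both PSD inputs is the profinite completion. The aim is to show that the rank hypothesis forces $\widehat{f_*}\colon\widehat{\pi_1(M)}\to\widehat{\pi_1(N)}$ to be an isomorphism. Granting this, the first input is immediate, since the integral homology of every finite cover is manifestly a profinite invariant, so $\widehat{f_*}$ being an isomorphism yields $H_*(\wti M;\Z)\cong H_*(\wti N;\Z)$ compatibly with $\wti f$. The second input then follows from the profinite invariance of volume for finite-volume $3$-manifolds: the JSJ decomposition is detected by the profinite completion (Wilton--Zalesskii), and the hyperbolic volumes of the geometric pieces are themselves profinite invariants, so $\vol(M)=\sum(\text{piece volumes})$ is determined by $\widehat{\pi_1}$. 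Thus a profinite isomorphism delivers exactly the two hypotheses of Theorem~\ref{thm:ps99}, and the conclusion follows.

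The heart of the argument, and the step I expect to be hardest, is the implication from the rank equalities to injectivity of $\widehat{f_*}$. The surjectivity of $\widehat{f_*}$ is automatic. For injectivity, suppose the closed kernel were nontrivial; then there is a finite-index normal (hence subnormal) subgroup $\Gamma\trianglelefteq\pi_1(N)$ and a finite quotient of $f_*^{-1}(\Gamma)$ into which $\ker f_*$ maps nontrivially but which is invisible from $\Gamma$. The goal is to convert this surplus finite content into a strict inequality $\rk\big((f_*)^{-1}(\Gamma')\big)>\rk(\Gamma')$ for some deeper subnormal $\Gamma'$, contradicting the hypothesis. The subtlety is that a nontrivial profinite kernel need not force a jump in rank for arbitrary groups, and, as the example of \cite{BW96} recorded in the Remark shows, the weaker data of the homology of all finite covers does \emph{not} even determine the volume; hence the rank must be made to control the full non-abelian finite-quotient structure, not merely the abelianizations. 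This is where the specific nature of $\pi_1(N)$ as the group of a prime, aspherical, non-graph $3$-manifold must enter: one needs a rank-efficiency statement to the effect that, for such groups, the rank of a finite-index subgroup is governed by the profinite completion, so that the extra profinite content carried by $\pi_1(M)$ is forced to appear as extra generators. Establishing this efficiency, and the bookkeeping across the JSJ pieces and their gluings, is the main obstacle; the subnormal hypothesis is what ensures enough finite quotients are available to detect the kernel.
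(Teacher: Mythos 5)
Your proposal does not close its own central step: you reduce everything to the claim that the rank equalities force $\widehat{f_*}\colon\widehat{\pi_1(M)}\to\widehat{\pi_1(N)}$ to be an isomorphism, and then explicitly defer that claim, calling it ``the main obstacle.'' No argument is given for the asserted ``rank-efficiency'' of $3$-manifold groups, and it is not a known result that a nontrivial closed kernel of $\widehat{f_*}$ can be converted into a strict rank inequality $\rk(f_*^{-1}(\Gamma'))>\rk(\Gamma')$ for some finite-index subnormal $\Gamma'$. Since this is where all the content of the theorem lives, the proposal is a reduction to an unproved (and at least as hard) statement rather than a proof.

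Even granting that step, the route through Theorem~\ref{thm:ps99} has independent problems. First, the hypothesis that $N$ is not a \emph{closed} graph manifold does not give $\vol(N)>0$: graph manifolds and Seifert fibered manifolds with nonempty toroidal boundary are allowed by the hypotheses and have zero volume. Second, the assertion that the hyperbolic volumes of the geometric pieces are profinite invariants is not a theorem you can cite; whether $\widehat{\pi_1}$ determines the volume of a hyperbolic $3$-manifold is an open problem, and Wilton--Zalesskii's detection of the JSJ decomposition does not yield it. Third, the doubling reduction is unjustified: finite-index subnormal subgroups of $\pi_1(DN)$ do not correspond to finite-index subgroups of $\pi_1(N)$, so the rank hypothesis is not obviously inherited by $Df$, and a proper map need not even induce a map of doubles without further boundary control. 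The paper's actual proof is entirely different and avoids all of this: it uses the Virtual Fibering Theorem to pass to a fibered regular cover $\wti{N}$, uses the rank hypothesis together with the rank-gradient characterization of fiberedness (Theorem~\ref{mainthm-dfv14}, from \cite{DFV14}) to show that the pulled-back class on $\wti{M}$ is fibered with fiber of the same rank, applies Proposition~\ref{mainthm-fib} to get that $f_*$ is an isomorphism on the covers, descends to $\pi_1(M)\to\pi_1(N)$ by a five-lemma argument, and concludes with Waldhausen's theorem and Mostow--Prasad rigidity; neither Theorem~\ref{thm:ps99} nor profinite completions appear in that argument.
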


One of the key ingredients of the proof of Theorem~\ref{mainthm} is the virtual fibering theorem, stated as Theorem~\ref{thm:vft}. Only 3-manifolds with empty or toroidal boundary can be (virtually) fibered. This explains the restriction to the case of 3-manifolds with empty or toroidal boundary.

Before we can state our second theorem we need to introduce a few more definitions.
\bn
\item
We say that a covering $p\colon \what{N}\to N$ of a manifold $N$ is \emph{subregular} if the covering $p$ can be written as a composition of coverings $p_i\colon N_i\to N_{i-1}$, $i=1,\dots,k$ with $N_k=\what{N}$ and $N_0=N$, such that each $p_i$ is regular.
\item  Given a  3-manifold $M$ we denote by $h(M)$ the minimal number of one-handles in a handle-composition of $M$ with one zero-handle.
If $M$ is closed, then $h(M)$ equals the \emph{Heegaard genus}, i.e.\ the minimal genus of a Heegaard surface of $M$. If $M$ has non-empty boundary, $h(M)$ is smaller or equal to the minimal genus of a Heegaard splitting of the 3-manifold triad $(M; \emptyset, \partial M)$ as defined by A.~Casson and C.~Mc Gordon~\cite{CG87}, see also M.~Scharlemann~\cite{Scha02}.
\en

Our second theorem gives another  variation on Theorem~\ref{thm:ps99}.

\begin{theorem}\label{mainthm2}
Let $f\colon M\to N$ be a proper map between two   aspherical 3-manifolds  with empty or toroidal boundary. We assume that $N$  is not a closed graph manifold.
If $f$ is a $\pi_1$-epimorphism and if for  every finite subregular cover $\wti{N}$ of $N$ and induced cover $\wti{M}$ the inequality $h(\wti{M})\leq h(\wti{N})$ holds, then $f$ is homotopic to a homeomorphism.
\end{theorem}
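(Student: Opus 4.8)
The plan is to deduce Theorem~\ref{mainthm2} from Theorem~\ref{mainthm} by checking the rank hypothesis of the latter for the map $f$. The first step is to match up the two collections of covers. A finite subregular cover $\wti{N}\to N$ is precisely a cover associated to a finite-index subnormal subgroup $\Gamma=\pi_1(\wti{N})$ of $\pi_1(N)$, since writing the cover as a tower of regular coverings is the same as exhibiting a chain $\pi_1(N)=\Gamma_0\supset\dots\supset\Gamma_k=\Gamma$ with each $\Gamma_i$ normal in $\Gamma_{i-1}$. The induced cover satisfies $\pi_1(\wti{M})=(f_*)^{-1}(\Gamma)$, and the lifted map $\wti{f}\colon\wti{M}\to\wti{N}$ is again a proper $\pi_1$-epimorphism of aspherical 3-manifolds with empty or toroidal boundary. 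So both hypotheses are indexed by the same set, and it suffices to show that the equalities $h(\wti{M})=h(\wti{N})$ force $\rk((f_*)^{-1}(\Gamma))=\rk(\Gamma)$ for every finite-index subnormal $\Gamma$.

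Next I would record the elementary comparison between the two invariants. A handle composition of a 3-manifold $X$ with one zero-handle and $h(X)$ one-handles has a wedge of $h(X)$ circles for its one-skeleton, so $\pi_1(X)$ is generated by $h(X)$ elements and $\rk(\pi_1(X))\le h(X)$. As $\wti{f}$ is a $\pi_1$-epimorphism the subgroup $(f_*)^{-1}(\Gamma)$ surjects onto $\Gamma$, whence $\rk((f_*)^{-1}(\Gamma))\ge\rk(\Gamma)$. Together with the hypothesis these give
\[ \rk(\Gamma)\,\le\,\rk\big((f_*)^{-1}(\Gamma)\big)\,\le\, h(\wti{M})\,=\,h(\wti{N}). \]

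This chain alone is not enough: its right-hand end is only known to dominate $\rk(\Gamma)$, and for 3-manifolds $h$ may strictly exceed the rank of the fundamental group, so equality of the Heegaard data does not visibly collapse the chain. I would therefore argue by contraposition. If $f$ is not homotopic to a homeomorphism then, since $f_*$ is already surjective and the targets are aspherical and not graph manifolds, $f_*$ fails to be injective; by Theorem~\ref{mainthm} some finite-index subnormal $\Gamma$ already records this as a strict gap $\rk((f_*)^{-1}(\Gamma))>\rk(\Gamma)$. The task is then to convert this rank jump into a strict inequality $h(\wti{M})>h(\wti{N})$ for a suitable---possibly deeper---subregular cover, contradicting the hypothesis. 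This conversion is the main obstacle: a non-trivial kernel of $f_*$ is seen immediately by the rank, but must now be detected by the coarser handle number $h$. I expect to attack it by descending through a tower of subregular covers inside $\Gamma$ along which $h$ is forced to track $\rk$---comparing the Heegaard gradient with the rank gradient---so that a persistent rank gap eventually produces a Heegaard gap; alternatively, one shows that the extra incompressible topology carried by a non-trivial kernel in a deep cover cannot be swept into fewer handles, forcing $h(\wti{M})>h(\wti{N})$ directly. Controlling $h$ from above on $\wti{N}$ closely enough to be beaten by $h(\wti{M})$ is the delicate point on which the argument turns.
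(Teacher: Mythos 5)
Your proposed reduction of Theorem~\ref{mainthm2} to Theorem~\ref{mainthm} is not carried out: the step you yourself flag as ``the main obstacle''---converting a rank gap $\rk((f_*)^{-1}(\Gamma))>\rk(\Gamma)$ into a strict inequality $h(\wti{M})>h(\wti{N})$ for some subregular cover---is essentially the whole content of the theorem, and you do not prove it. Nor is the general implication ``$h(\wti{M})=h(\wti{N})$ for all finite subregular covers implies $\rk((f_*)^{-1}(\Gamma))=\rk(\Gamma)$ for all finite-index subnormal $\Gamma$'' accessible by comparing $h$ and $\rk$ cover by cover: the only universally available relation is $\rk(\pi_1(X))\le h(X)$ (Lemma~\ref{lem:heegaard-genus}(1)), and for 3-manifolds $h$ can strictly exceed the rank of the fundamental group, so equality of the handle numbers says nothing direct about the ranks. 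Your chain of inequalities records this accurately, but restating the problem in contrapositive form does not advance it, and the two alternatives you sketch at the end remain unexecuted.

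The paper does not attempt this reduction. It instead reruns the proof of Theorem~\ref{mainthm} with the rank replaced by $h$, exploiting that for the \emph{specific} covers occurring there the handle number of the target is computed exactly. One passes to a finite regular cover $\wti{N}$ carrying a primitive fibered class $\phi$ with fiber $F$ and considers only the cyclic covers $\wti{N}_n$ and $\wti{M}_n$. Proposition~\ref{prop:heegaard-fibered} gives $h(\wti{N}_n)\le 2g(F)+1$ for all $n$, so $\hg(\wti{N},\phi)=0$; by hypothesis $\hg(\wti{M},f^*\phi)=0$, and Proposition~\ref{prop:heegaard-gradient} (via $\rk\le h$ and Theorem~\ref{mainthm-dfv14}) then shows that $f^*\phi$ is fibered, with fiber $E$ say. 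For suitable $n$, Proposition~\ref{prop:rank-fibered} and Lemma~\ref{lem:heegaard-genus} yield $1+\rk(\pi_1(E))=\rk(\pi_1(\wti{M}_n))\le h(\wti{M}_n)=h(\wti{N}_n)\le 2g(F)+1\le 1+\rk(\pi_1(F))$, while surjectivity of $\pi_1(E)\to\pi_1(F)$ gives the reverse inequality; hence Proposition~\ref{mainthm-fib} applies and one concludes as in Theorem~\ref{mainthm}. The two ideas missing from your write-up are exactly these: first one must deduce fiberedness of $f^*\phi$ from vanishing of the Heegaard gradient, and then the fiberedness of $\phi$ supplies the upper bound on $h(\wti{N}_n)$ that lets the chain of inequalities close on the target side.
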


Using the examples given in \cite{Li013} and \cite{SW07} of closed aspherical 3-manifolds with arbitrarily large discreapency between the Heegaard genus and the rank of the fundamental group,  it is possible to build examples of $\pi_1$-epimorphism $f\colon M\to N$  between two aspherical closed 3-manifolds where the inequality $h(M) < h(N)$ can be arbitrarily large (see Section \ref{sec:examples}). However to the best of our knowledge  it is still not known whether for every degree-one map $f\colon M\to N$  between two aspherical 3-manifolds the inequality $h(M)\geq h(N)$ holds.

The following result, which is basically a consequence of the proof of Theorem~\ref{mainthm2}, shows that the inequality always holds virtually, even for a proper $\pi_1$-epimorphism.

\begin{proposition}\label{mainprop3}
Let $f\colon M\to N$ be a proper map between two   aspherical 3-manifolds  with empty or toroidal boundary. We assume that $N$  is not a closed graph manifold. If $f$ is a $\pi_1$-epimorphism, then there exists a finite subregular cover $\wti{N}$ of $N$ such that the induced cover $\wti{M}$ satisfies the inequality $h(\wti{M})\geq h(\wti{N})$. See Conjecture~\ref{conj:h-degree-one-map}.
\end{proposition}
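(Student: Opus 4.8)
The plan is to reduce the inequality to an intrinsic statement about $N$, namely that some finite subregular cover of $N$ has handle number equal to the rank of its fundamental group, and then to let the $\pi_1$-epimorphism carry this equality upstairs for free. Two elementary inequalities drive the reduction. First, a handle decomposition of a compact $3$-manifold $P$ with one zero-handle and $h(P)$ one-handles presents $\pi_1(P)$ on $h(P)$ generators, so $\rk(\pi_1(P))\le h(P)$ for every $P$. Second, for a finite-index subgroup $\Gamma\le\pi_1(N)$ with associated cover $\wti N$ and pull-back cover $\wti M$ we have $\pi_1(\wti M)=(f_*)^{-1}(\Gamma)$, and $f_*$ restricts to an epimorphism $(f_*)^{-1}(\Gamma)\onto\Gamma$, so $\rk(\pi_1(\wti M))\ge\rk(\pi_1(\wti N))$. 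Combining them gives, for every such cover,
\[ h(\wti M)\,\ge\,\rk\big(\pi_1(\wti M)\big)\,\ge\,\rk\big(\pi_1(\wti N)\big). \]
Hence it suffices to produce a single finite subregular cover $\wti N$ with $h(\wti N)=\rk(\pi_1(\wti N))$: for that cover the chain closes to $h(\wti M)\ge\rk(\pi_1(\wti N))=h(\wti N)$. Note that this uses nothing about $f$ beyond surjectivity, so the whole problem is transferred to the intrinsic geometry of $N$.

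To build such a cover I would use the geometric decomposition of $N$ together with the hypothesis that $N$ is not a closed graph manifold; since rank and handle number can differ in general (Waldhausen's rank-versus-genus problem), this hypothesis is presumably exactly what forces a virtual coincidence. The mechanism is cleanest when $N$ is hyperbolic: by the virtual fibering theorem I would pass to a finite cover fibering over $S^1$ with fiber of genus $g$ and then climb the tower of finite cyclic covers dual to the fiber. Such a bundle always admits a Heegaard splitting with $h\le 2g+1$, while by Souto's theorem $\rk(\pi_1)=2g+1$ once the monodromy is replaced by a sufficiently high power; then $2g+1=\rk(\pi_1)\le h\le 2g+1$ forces $h=\rk(\pi_1)$. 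For the remaining cases --- cusped or mixed manifolds with a hyperbolic piece, and graph manifolds with non-empty toroidal boundary --- I would seek the same coincidence $h=\rk(\pi_1)$ in a suitable finite cover, using the corresponding rank estimates and, for the graph pieces, an explicit handle decomposition adapted to the unwrapped Seifert structure.

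The main obstacle is precisely this existence statement in full generality, and above all its uniformity. Beyond the clean closed-hyperbolic case, the argument rests on rank computations for fibered manifolds with complicated monodromy and on controlling rank and handle number across a JSJ decomposition, both delicate; one must also check that the relevant covers can be fitted inside a \emph{subregular} tower over $N$, not merely over an intermediate fibered cover. The crux is the bounded graph-manifold case: the rank-versus-handle-number gap genuinely persists for \emph{closed} graph manifolds, and showing that it nevertheless closes virtually once a boundary torus (or a hyperbolic piece) is present is the step I expect to be hardest, and is exactly where the hypothesis that $N$ is not a closed graph manifold is indispensable.
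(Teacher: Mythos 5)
Your reduction is exactly the one the paper uses: for every finite cover one has $h(\wti{M})\geq \rk(\pi_1(\wti{M}))\geq \rk(\pi_1(\wti{N}))$, so the whole proposition comes down to exhibiting a single finite subregular cover $\wti{N}$ of $N$ with $h(\wti{N})=\rk(\pi_1(\wti{N}))$. The gap is that you never actually establish this existence statement except for closed hyperbolic $N$ (via Souto), and you explicitly leave the cusped, mixed and bounded graph-manifold cases open while predicting they are the hard part. They are not, and no geometric case analysis or JSJ bookkeeping is needed. The Virtual Fibering Theorem (Theorem~\ref{thm:vft}) applies to \emph{every} aspherical $N$ that is not a closed graph manifold --- including graph manifolds with non-empty toroidal boundary and mixed manifolds --- so $N$ admits a finite regular cover $\wti{N}$ fibering over $S^1$ with fiber $F$. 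One then argues homologically rather than via Souto: fix a prime $p$ and choose $n$ so that the $n$-th power of the monodromy acts as the identity on the finite group $H_1(F;\F_p)$; then $H_1(\wti{N}_n;\F_p)\cong \F_p\oplus H_1(F;\F_p)$ for the $n$-fold cyclic cover $\wti{N}_n$ dual to the fiber, whence
\[ \rk\big(\pi_1(\wti{N}_n)\big)\;\geq\;1+\dim_{\F_p}H_1(F;\F_p)\;=\;1+\rk(\pi_1(F))\;\geq\;h(\wti{N}_n), \]
the last inequality coming from the standard Heegaard splitting of a surface bundle (two fiber neighborhoods joined by a tube). Combined with $h\geq\rk(\pi_1)$ this forces $h(\wti{N}_n)=\rk(\pi_1(\wti{N}_n))$, and $\wti{N}_n\to\wti{N}\to N$ is a composition of regular covers, hence subregular, which also disposes of your uniformity worry. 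This is precisely Proposition~\ref{prop:rank-fibered} together with Lemma~\ref{lem:heegaard-genus} (packaged as Proposition~\ref{prop:heegaard-fibered}); Souto's result appears in the paper only as a remark giving the stronger statement that all sufficiently large $n$ work when $N$ is closed hyperbolic. So your skeleton is the right one, but the step you flag as the crux is exactly the step you have not supplied, and the correct fix is elementary and uniform across all geometries.
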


\begin{remark}
In Theorems~\ref{mainthm}, \ref{mainthm2} and in Proposition~\ref{mainprop3} we  excluded the case that $N$ is a  closed graph manifold. What we really need for the statements to hold is that $N$ is a 3-manifold that is virtually fibered.
By the Virtual Fibering Theorem of I.~Agol~\cite{Ag08,Ag13} P.~Przytycki--D.~Wise~\cite{PW14, PW12} and D.~Wise~\cite{Wi09,Wi12a,Wi12b} any aspherical 3-manifold that is \emph{not} a closed graph manifold is virtually fibered.
The three results also apply if $N$ is a closed graph manifold that is virtually fibered, e.g.\ if $N$ is a Sol-manifold or if $N$ carries a Riemannian metric of nonpositive sectional curvature \cite{Liu013}. We do not know though whether our results hold for  aspherical graph manifolds that are not virtually fibered. We refer to \cite{LW93,Ne96} for more  examples of graph manifolds that are not virtually fibered.
\end{remark}

This paper is organized as follows. In Section~\ref{section:ranks} we provide the proof of Theorem~\ref{mainthm}. The proof of Theorem~\ref{mainthm2} has many formal similarities with the proof of Theorem~\ref{mainthm2}. In Section~\ref{section:heegaard} we will point out how to modify the proof  of Theorem~\ref{mainthm} to obtain a proof of  Theorem~\ref{mainthm2}.


\subsection*{Acknowledgments}
Work on this paper was supported by the SFB 1085 `Higher Invariants' at the Universit\"at Regensburg funded by the Deutsche Forschungsgemeinschaft (DFG).  We also wish to thank the referee for helpful feedback.

\section{The rank of fundamental groups and $\pi_1$-epimorphisms}\label{section:ranks}

%
%

\subsection{Proof of Theorem~\ref{mainthm} in the fibered case}

Let $M$ be a 3-manifold. Throughout this paper we identify $H^1(M;\Z)$ with $\hom(\pi_1(M),\Z)$. We say a primitive class $\phi\in H^1(M;\Z)=\hom(\pi_1(M),\Z)$ is \emph{fibered} 
 if there exists
 a fibration $p\co M\to S^1$ such that the induced map $p_*\co \pi_1(M)\to \pi_1(S^1)=\Z$ coincides with $\phi$. It follows from Stallings' theorem \cite{St62} (together with the resolution of the Poincar\'e conjecture) that $\phi$ is a fibered class if and only if $\ker(\phi\co \pi_1(M)\to \Z)$ is finitely generated.
 

\begin{proposition}\label{mainthm-fib}
Suppose that $f\colon M\to N$ is a proper $\pi_1$-epimorphism   between two 3-manifolds. We assume
that there exists a primitive class $\phi\in H^1(N;\Z)$ such that $\phi\in H^1(N;\Z)$ and $f^*\phi\in H^1(M;\Z)$ are fibered. If $\rk(\ker(f^*\phi))=\rk(\ker(\phi))$, then  $f_*\colon \pi_1(M)\to \pi_1(N)$ is an isomorphism.
\end{proposition}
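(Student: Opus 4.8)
The plan is to use the two fibrations to reduce the statement to a purely group-theoretic assertion about the induced map on the fiber groups. Since $\phi$ is a (primitive) fibered class, $N$ fibers over $S^1$ with connected fiber a compact orientable surface $\Sigma_N$, yielding a short exact sequence $1\to K_N\to \pi_1(N)\to \Z\to 1$ with $K_N=\ker(\phi)\cong\pi_1(\Sigma_N)$; likewise $M$ fibers with fiber $\Sigma_M$ and $K_M=\ker(f^*\phi)\cong\pi_1(\Sigma_M)$. As $f^*\phi=\phi\circ f_*$ and $f_*$ is onto, the class $f^*\phi$ is again surjective onto $\Z$, so $f_*$ is a morphism of these two sequences: it carries $K_M$ into $K_N$ and descends to a surjective endomorphism of $\Z$, which is automatically an isomorphism. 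A short diagram chase shows that the restriction $\alpha:=f_*|_{K_M}\co K_M\to K_N$ is onto, and by the five lemma $f_*$ will be an isomorphism the moment $\alpha$ is. Everything thus reduces to showing that this rank-preserving epimorphism of surface groups is injective.

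The central point is the following statement, which I would isolate as a lemma: an epimorphism $\alpha\co K_M\onto K_N$ between fundamental groups of compact orientable surfaces with $\rk(K_M)=\rk(K_N)$ is an isomorphism. I would argue by a case analysis according to whether the fibers are closed. Properness of $f$ supplies a crucial compatibility: if $N$ is closed then $f(\partial M)\subset\partial N=\varnothing$ forces $\partial M=\varnothing$, so whenever $\Sigma_M$ has boundary (i.e.\ $K_M$ is free) the fiber $\Sigma_N$ has boundary as well (i.e.\ $K_N$ is free). This excludes the one genuinely dangerous configuration, a free group surjecting onto a closed surface group, where an equal-rank surjection can fail to be injective. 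The symmetric bad configuration, a closed surface group surjecting onto a free group of the same rank, is excluded by the rank hypothesis itself: a genus-$g$ closed surface group has rank $2g$, but it admits no free quotient of rank larger than $g$, since a free quotient of rank $r$ would pull $H^1$ back to an $r$-dimensional subspace of $H^1(\Sigma_g)\cong\Z^{2g}$ that is isotropic for the nondegenerate symplectic cup-product form, forcing $r\le g$.

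There remain exactly two cases, in both of which $\Sigma_M$ and $\Sigma_N$ are of the same type. If both fibers have boundary, then $K_M$ and $K_N$ are free of equal rank, hence abstractly isomorphic; composing $\alpha$ with such an isomorphism produces a surjective endomorphism of a finitely generated free group, which is an isomorphism by the Hopf property. If both fibers are closed, equality of ranks $2g=2h$ forces equal genus, so again $K_M\cong K_N$, and the same argument applies, the Hopf property being available because finitely generated residually finite groups, and in particular surface groups, are Hopfian. In either case $\alpha$ is an isomorphism, and feeding this back through the five lemma proves the proposition; the small degenerate cases ($K_N$ trivial, $\Z$, or $\Z^2$) are checked directly.

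The main obstacle is the surface-group lemma, and within it the correct bookkeeping of the closed-versus-boundary cases. One must be certain that the only situation in which an equal-rank epimorphism of surface groups can fail to be injective, namely a free group onto a closed surface group, is genuinely ruled out here by the properness of $f$, while the mirror situation is ruled out by the rank hypothesis through the isotropic-subspace estimate for the cup-product form. Once these two exclusions are secured, the Hopf property does the remaining work, and the translation between the group-theoretic conclusion and the topological statement is routine.
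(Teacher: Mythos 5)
Your proof is correct and follows essentially the same route as the paper's: reduce via the two fibration short exact sequences and the five lemma to showing that the induced equal-rank epimorphism of fiber surface groups is an isomorphism, using properness to rule out a closed target fiber when the source fiber has boundary, and Hopficity of surface groups to conclude. The only difference is cosmetic — you organize the cases by fiber type rather than by whether $M$ is closed, and you spell out (via the isotropic-subspace bound for the cup-product form) the exclusion of a closed surface group surjecting onto an equal-rank free group, a step the paper asserts without detail.
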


\begin{proof}
We denote by $F$ the fiber of $\phi$ and we denote by $E$ the fiber of $f^*\phi$.
These are compact orientable surfaces. 

We consider the following commutative diagram of short exact sequences
\[ \xymatrix@R0.7cm@C1.1cm{ 1\ar[r] &\pi_1(E)\ar@{->>}[d]^-{f_*}\ar[r]&\pi_1(M)\ar@{->>}[d]^-{f_*}\ar[r]^-{\phi\circ f_*}&\Z\ar[r]\ar[d]^-=&0\\
1\ar[r] &\pi_1(F)\ar[r]&\pi_1(N)\ar[r]^-{\phi}&\Z\ar[r]&0.}\]
Since $f_*\colon \pi_1(M)\to \pi_1(N)$ is an epimorphism it follows that the map  on the left is also an epimorphism.
 By our hypothesis we have \[\rk(\pi_1(E))\,=\,\rk(\ker(f^*\phi))\,=\,\rk(\ker(\phi))\,=\,\rk(\pi_1(F)).\] 
We make the following claim.

\begin{claim}
The induced map $\pi_1(E)\to \pi_1(F)$ is an isomorphism.
\end{claim}

First consider the case that $M$ is closed. In that case $E$ is also closed. Together with the observation that 
$f_*\colon \pi_1(E)\to \pi_1(F)$ is an epimorphism and from the observation that $\rk(\pi_1(E))=\rk(\pi_1(F))$ we deduce that $F$ is also closed and that the map $f_*\colon \pi_1(E)\to \pi_1(F)$ is an isomorphism.

Now we consider the case that $M$ has boundary. Then also $E$ has boundary. Since $f$ is a proper map the manifold $N$, and thus also $F$ have boundary. Thus $f_*\colon \pi_1(E)\to \pi_1(F)$ is an epimorphism between free groups of the same rank, thus it is already an isomorphism.
This concludes the proof of the claim.

An elementary argument using the above commutative diagram shows that the induced map $f_*\colon \pi_1(M)\to \pi_1(N)$ is an isomorphism.
\end{proof}

\subsection{Ranks and fibers}

\begin{proposition}\label{prop:rank-fibered}
Let $M$ be a 3-manifold. We write $\pi=\pi_1(M)$. Furthermore let $\phi\in H^1(M;\Z)=\hom(\pi,\Z)$ be a fibered class. We write
\[\pi_n=\ker(\pi_1(M)\xrightarrow{\phi}\Z\to \Z/n).\]
Let $p$ be a prime. For any $n$ we have
\[ \dim_{\F_p}(H_1(\pi_n;\F_p))\leq \rk(\pi_n)\leq 1+\rk(\ker(\phi)).\]
Furthermore, there exists an $n$ such that 
\[ \dim_{\F_p}(H_1(\pi_n;\F_p))=\rk(\pi_n)= 1+\rk(\ker(\phi)).\]
\end{proposition}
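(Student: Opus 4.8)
The plan is to establish the two inequalities first and then to obtain the final equality by exhibiting a single good value of $n$. The left-hand inequality $\dim_{\F_p}(H_1(\pi_n;\F_p))\le\rk(\pi_n)$ is a general fact about finitely generated groups: for any such group $G$ one has $H_1(G;\F_p)\cong G^{ab}\otimes_{\Z}\F_p$, so a generating set of $G$ of minimal cardinality $\rk(G)$ maps to a spanning set of the $\F_p$-vector space $H_1(G;\F_p)$. Applying this to $G=\pi_n$ gives the bound with no extra input.

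For the right-hand inequality I would use that $\pi_n$ is again a fibered group. Since $\ker(\phi)\subset\pi_n$ and the restriction of $\phi$ to $\pi_n$ has image $n\Z\cong\Z$, there is a split short exact sequence $1\to\ker(\phi)\to\pi_n\to\Z\to 1$. Hence $\pi_n$ is generated by any generating set of $\ker(\phi)$ together with a single element mapping to a generator of the quotient, which yields $\rk(\pi_n)\le 1+\rk(\ker(\phi))$. Combining the two bounds already gives the displayed inequalities for every $n$.

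For the equality statement I would argue geometrically. Let $F$ be the fiber of $\phi$ and $\psi\colon F\to F$ the monodromy, so that $\ker(\phi)=\pi_1(F)$ and $\pi\cong\pi_1(F)\rtimes_{\psi_*}\Z$; then $\pi_n\cong\pi_1(M_n)$, where $M_n$ is the mapping torus of $\psi^n$, and $H_1(\pi_n;\F_p)=H_1(M_n;\F_p)$. The Wang exact sequence with $\F_p$-coefficients produces a short exact sequence
\[ 0\to\coker\!\big(\psi_*^{\,n}-\mathrm{id}\colon H_1(F;\F_p)\to H_1(F;\F_p)\big)\to H_1(M_n;\F_p)\to H_0(F;\F_p)\to 0, \]
because $\psi$ acts trivially on $H_0(F;\F_p)=\F_p$. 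The crucial point is then that $\psi_*$ lies in the \emph{finite} group $\gl\big(H_1(F;\F_p)\big)$ and therefore has finite order; choosing $n$ to be that order makes $\psi_*^{\,n}=\mathrm{id}$, so the cokernel term becomes all of $H_1(F;\F_p)$ and $\dim_{\F_p}H_1(M_n;\F_p)=\dim_{\F_p}H_1(F;\F_p)+1$. Since $F$ is a compact orientable surface, $H_1(F;\Z)$ is torsion-free and $\dim_{\F_p}H_1(F;\F_p)=\rk(\pi_1(F))=\rk(\ker(\phi))$ (true both when $F$ is closed and when $F$ has boundary), so this equals $1+\rk(\ker(\phi))$. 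Feeding this into the two inequalities already proved forces all three quantities to coincide for this $n$.

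The only genuinely substantive step is the equality case, and within it the observation that the induced monodromy on $\F_p$-homology has finite order because $\gl(d,\F_p)$ is a finite group; everything else is bookkeeping with the Wang sequence and with the identification $\dim_{\F_p}H_1(F;\F_p)=\rk(\pi_1(F))$ for orientable surfaces. I expect the minor subtlety to be keeping the three different objects straight, namely group homology $H_1(\pi_n;\F_p)$, space homology $H_1(M_n;\F_p)$, and the linear-algebra cokernel, and verifying that they all agree.
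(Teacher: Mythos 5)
Your argument is correct and follows essentially the same route as the paper: both exploit the splitting $\pi_n\cong\Z\ltimes\ker(\phi)$ to get the upper bound $\rk(\pi_n)\le 1+\rk(\ker(\phi))$, and both obtain the equality by choosing $n$ so that the monodromy acts trivially on the finite group $H_1(F;\F_p)$, whence $H_1(\pi_n;\F_p)\cong\F_p\oplus H_1(F;\F_p)$ and $\dim_{\F_p}H_1(F;\F_p)=\rk(\pi_1(F))$ for the compact orientable surface $F$. Your use of the Wang sequence is just a repackaging of the paper's direct computation of $H_1$ of the semidirect product, so there is no substantive difference.
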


\begin{proof}
We denote by $F$ the fiber of the fibration corresponding to $\phi$ and we write $\Gamma=\pi_1(F)$. We can identify $\pi$ with a semidirect product $\ll t\rr\ltimes_{\mu} \G$ in such a way that $\phi(t)=1$ and $\phi|_\Gamma$ is trivial. Here $\mu\colon \G\to \G$ denotes the corresponding automorphism of $\Gamma$, which is just given by the monodromy action on $\Gamma=\pi_1(F)$.

 Given any $n$ we have
\[\pi_n=\ll t^n\rr \ltimes \Gamma.\]
It follows that 
\[ \dim_{\F_p}(H_1(\pi_n;\F_p))\leq \rk(\pi_n)\leq 1+\rk(\Gamma)=1+\rk(\ker(\phi)).\]
Now let $p$ be a prime. The homology group $H_1(\Gamma;\F_p)$ is finite. Thus there exists an $n$ such that $\mu_*^n$ acts like the identity on $H_1(\Gamma;\F_p)$. It follows that 
\[ \ba{rcl}\rk(\pi_n)&=&\rk(\ll t^n\rr \ltimes_{\mu} \Gamma)=\rk(
\ll t\rr \ltimes_{\mu^n} \Gamma)\geq H_1(\ll t\rr \ltimes_{\mu^n} \Gamma;\F_p)\\
&=&\rk(\Z\oplus H_1(\Gamma;\F_p))\\
&=&1+\rk(\Gamma)=1+\rk(\ker(\phi)).\ea\]
Here we used that for the fundamental group of the orientable compact surface $F$ we have $\rk(\pi_1(F))=\dim_{\F_p}(H_1(F;\F_p))$. 
\end{proof}

\subsection{The rank gradient and fiberedness}

Let $\pi$ be a group and $\phi\colon \pi\to \Z$ be a homomorphism. 
We write
\[\pi_n=\ker(\pi \xrightarrow{\phi}\Z\to \Z/n).\]
Following \cite{La05} and \cite{DFV14} we 
 we refer to
\[ \rg(\pi,\phi):=\liminf_{n\to \infty} \smfrac{1}{n}\rk(\pi_n)\]
as the \emph{rank gradient} of $(\pi,\phi)$.

\begin{theorem}\label{mainthm-dfv14}
Let $M$ be a 3-manifold and let $\phi\co \pi_1(M)\to \Z$ be a non-trivial homomorphism.
Then the following two statements are equivalent:
\bn
\item $\phi\in \hom(\pi_1(M),\Z)=H^1(M;\Z)$ is fibered,
\item $\rg(\pi_1(M),\phi)=0$.
\en
\end{theorem}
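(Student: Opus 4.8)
The plan is to prove the two directions separately, with the interesting content living in $(1)\Rightarrow(2)$ for the implication and a contrapositive argument for $(2)\Rightarrow(1)$.

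For the direction $(1)\Rightarrow(2)$, suppose $\phi$ is fibered with fiber $F$ and fiber group $\Gamma=\pi_1(F)$. By Proposition~\ref{prop:rank-fibered} we already have the uniform bound $\rk(\pi_n)\leq 1+\rk(\ker(\phi))=1+\rk(\Gamma)$ for every $n$, where $\pi_n=\ker(\pi_1(M)\xrightarrow{\phi}\Z\to\Z/n)$. Since $\rk(\Gamma)$ is a fixed finite number independent of $n$, I would simply divide by $n$ and take the limit:
\[ \rg(\pi_1(M),\phi)=\liminf_{n\to\infty}\tfrac{1}{n}\rk(\pi_n)\leq \liminf_{n\to\infty}\tfrac{1+\rk(\Gamma)}{n}=0. \]
As rank gradient is manifestly non-negative, this forces $\rg(\pi_1(M),\phi)=0$. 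So this direction is essentially immediate from the already-established proposition.

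For the direction $(2)\Rightarrow(1)$, I expect this to be the real content and the main obstacle. The natural strategy is the contrapositive: assume $\phi$ is \emph{not} fibered and deduce that the rank gradient is strictly positive. By the Stallings criterion recalled just before Proposition~\ref{mainthm-fib}, $\phi$ non-fibered means that $\ker(\phi\colon\pi_1(M)\to\Z)$ is \emph{not} finitely generated. The geometric input I would invoke is Thurston-norm and sutured-manifold theory: a non-fibered class lies on the boundary of, or outside, a top-dimensional fibered cone, and the failure of fiberedness is detected by the Thurston norm of multiples of $\phi$ growing in a way that forces the finite cyclic covers $\pi_n$ to have genuinely growing rank. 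Concretely, I would bound the rank of $\pi_n$ below in terms of the homology of the infinite cyclic cover: if $\ker(\phi)$ is infinitely generated, then $H_1$ of the $\Z/n$-covers (or their $\F_p$-homology, as in the methods of the cited \cite{DFV14}) grows linearly in $n$, yielding $\liminf\tfrac{1}{n}\rk(\pi_n)>0$. This is exactly the kind of statement established in \cite{DFV14}, so most likely the cleanest route is to quote that reference for the hard inequality rather than reprove the norm-theoretic lower bound from scratch.

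The hard part will be the lower bound in $(2)\Rightarrow(1)$: showing that non-fiberedness produces strictly positive rank gradient. The upper bound is elementary, but controlling the growth of $\rk(\pi_n)$ from below requires either Thurston-norm machinery (relating $\rk(\pi_n)$ to $b_1$ of cyclic covers and the degree of the Alexander polynomial, via the McMullen inequality comparing the Alexander norm and the Thurston norm) or a direct appeal to the main theorem of \cite{DFV14}. Given that the paper explicitly attributes the rank-gradient terminology and framework to \cite{La05} and \cite{DFV14}, I anticipate the author's proof is short precisely because it cites \cite{DFV14} for this equivalence, so my proposal would be to reduce $(2)\Rightarrow(1)$ to that reference and supply the easy $(1)\Rightarrow(2)$ direction via Proposition~\ref{prop:rank-fibered} as above.
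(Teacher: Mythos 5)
Your proposal matches the paper's treatment: the direction $(1)\Rightarrow(2)$ is deduced exactly as you do, from the uniform bound $\rk(\pi_n)\leq 1+\rk(\ker(\phi))$ of Proposition~\ref{prop:rank-fibered}, and the direction $(2)\Rightarrow(1)$ is simply cited as the main result of \cite{DFV14}. The only minor discrepancy is your guess at the internal mechanism of \cite{DFV14} --- the paper notes it rests on twisted Alexander polynomials detecting fiberedness \cite{FV12} and Wise's results, rather than the McMullen Alexander-norm inequality --- but since you defer to that reference for the hard implication, this does not affect the argument.
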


Here the implication (1) $\Rightarrow$ (2) is an immediate consequence of Proposition~\ref{prop:rank-fibered}. The implication  (2) $\Rightarrow$ (1) is the main result of \cite{DFV14} (see also \cite{De16}). The proof in \cite{DFV14} builds on the fact that 
twisted Alexander polynomials detect fibered manifolds~\cite{FV12}, that proof in turn relies on  the recent results of D.\ Wise \cite{Wi09,Wi12a,Wi12b}.

\subsection{Proof of Theorem~\ref{mainthm}}

Before we provide a proof of Theorem~\ref{mainthm} we recall the Virtual Fibering Theorem of Agol~\cite{Ag08,Ag13}, Przytycki--Wise~\cite{PW14, PW12} and Wise~\cite{Wi09,Wi12a,Wi12b}. (We refer to \cite{AFW15} for the precise references for the Virtual Fibering Theorem.)

\begin{theorem}\label{thm:vft}
Any prime 3-manifold with empty or toroidal boundary that is not a closed graph manifold admits a finite cover that is fibered.
\end{theorem}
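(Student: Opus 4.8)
Theorem~\ref{thm:vft} is the Virtual Fibering Theorem, and I do not expect to reprove it from scratch; rather, the plan is to assemble it from the results of Agol, Wise, and Przytycki--Wise, making explicit how the pieces fit together. The organizing idea is to pass through the notion of a \emph{virtually RFRS} fundamental group together with Agol's fibering criterion: if a compact orientable irreducible 3-manifold $N$ with empty or toroidal boundary has RFRS fundamental group, then for any nontrivial class in $H^1(N;\Q)$ there is a finite cover in which the pullback of that class lies in the cone over a fibered face of the Thurston norm ball, so in particular that cover fibers over $S^1$. It therefore suffices to prove that $\pi_1(N)$ is virtually RFRS whenever $N$ is prime and not a closed graph manifold, together with the mild point that a suitable cover has positive first Betti number.

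The first step is to reduce to the cubical setting. By Geometrization, a prime 3-manifold that is not a closed graph manifold is either hyperbolic (possibly cusped) or has a geometric decomposition containing at least one hyperbolic piece. In the closed hyperbolic case one invokes the Kahn--Markovic surface subgroup theorem together with the Bergeron--Wise construction to produce a proper cocompact action of $\pi_1(N)$ on a $\mathrm{CAT}(0)$ cube complex; the cusped and the mixed cases are handled analogously, using separability of the relevant quasiconvex and peripheral subgroups. The upshot is that $\pi_1(N)$ is cubulated.

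The second and hardest step is to upgrade \emph{cubulated} to \emph{virtually special}, i.e.\ to produce a finite-index subgroup acting specially on a $\mathrm{CAT}(0)$ cube complex in the sense of Haglund--Wise. In the hyperbolic case this is precisely Agol's theorem, proved via Wise's Malnormal Special Quotient Theorem and Agol's weak-separation criterion; the mixed case is the theorem of Przytycki--Wise. Essentially all of the depth of the argument lives here, and I would treat this step as a black box rather than attempt to reconstruct the cube-complex combinatorics, which is exactly why the authors recall the statement rather than prove it.

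The final step is routine once specialness is available. A special group embeds in a right-angled Artin group, and right-angled Artin groups are RFRS, as one sees by iterating the rational derived series. Hence $\pi_1(N)$ virtually embeds in a right-angled Artin group and is therefore virtually RFRS, the RFRS property being inherited by subgroups. Feeding this into Agol's fibering criterion from the first paragraph produces a finite cover of $N$ that fibers over the circle. The only bookkeeping to confirm is that the fibering class can be taken nontrivial: for toroidal boundary this is automatic since $b_1\geq 1$, and for the closed case the required virtual positivity of $b_1$ is part of the same circle of results (virtual Haken / virtual $b_1>0$ for manifolds with a hyperbolic piece). This completes the plan.
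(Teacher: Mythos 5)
First, note that the paper does not prove this statement at all: Theorem~\ref{thm:vft} is recalled as a citation to Agol, Przytycki--Wise and Wise, with \cite{AFW15} given for the precise references. So any proof you write is necessarily supplying what the paper deliberately treats as a black box. Your outline of the literature proof --- cubulate, upgrade to virtually special, deduce virtually RFRS via the embedding of special groups into right-angled Artin groups, and conclude with Agol's RFRS fibering criterion together with virtual positivity of $b_1$ --- is the standard route, and the individual implications you quote are correct as stated.

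There is, however, a genuine gap in your case analysis. You reduce to ``either hyperbolic or has a geometric decomposition containing at least one hyperbolic piece,'' but the hypothesis of the theorem is only that $N$ is prime and not a \emph{closed} graph manifold. This leaves in play graph manifolds with non-empty boundary --- for instance a Seifert fibered space with boundary, or two Seifert pieces glued along a torus with a boundary torus left over --- which have no hyperbolic piece at all and are therefore not reached by Kahn--Markovic, by Agol's theorem for hyperbolic groups, or by Przytycki--Wise's mixed case. For these one needs a separate input: either the older direct construction of virtually horizontal surfaces in bounded graph manifolds, or Przytycki--Wise's separate theorem that graph manifolds with non-empty boundary are virtually special. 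Without that case your assembled argument does not cover the full hypothesis of the statement. A smaller imprecision: the RFRS condition requires a chain of \emph{finite-index} normal subgroups, so ``iterating the rational derived series'' of a right-angled Artin group does not literally produce the required chain; the correct statement to invoke is Agol's result that groups embedding in right-angled Coxeter groups (hence virtually special groups) are virtually RFRS.
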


Now we are finally in a position to prove Theorem~\ref{mainthm}.

\begin{proof}[Proof of Theorem~\ref{mainthm}]
Suppose that $f\colon M\to N$ is a proper $\pi_1$-epimorphism between two aspherical 3-manifolds  with empty or toroidal boundary.
We assume  $N$  is not a closed graph manifold.
We suppose that the following condition holds:
\bn
\item[$(*)$] Given any  finite-index subnormal subgroup $\Gamma$ of $\pi_1(M)$ the equality 
\[ \rk(f_*^{-1}(\Gamma))=\rk(\Gamma)\]
holds.
\en

We will show that $f$ is homotopic to a homeomorphism.
Since $N$ is a prime 3-manifold  with empty or toroidal boundary that is not a closed graph manifold it follows from Theorem~\ref{thm:vft} that $N$ admits a finite-index regular covering $p\colon \wti{N}\to N$ such that $\wti{N}$ admits a primitive fibered class $\phi\in H^1(\wti{N};\Z)=\hom(\pi_1(\wti{N}),\Z)$.

We denote by $\wti{M}$ the finite-cover of $M$ corresponding to $\pi_1(\wti{M}) = f_{*}^{-1}(\pi_1(\wti{N}))$.
Given $n\in \N$ we denote by $\wti{M}_n$ the cover of $\wti{M}$ corresponding to the epimorphism 
\[ \pi_1(\wti{M})\xrightarrow{\phi\circ f_*}\Z\to \Z/n.\]
Similarly we denote by $\wti{N}_n$ the cover of $\wti{N}$ corresponding to the epimorphism 
\[ \pi_1(\wti{N})\xrightarrow{\phi}\Z\to \Z/n.\]
Evidently $\pi_1(\wti{N}_n)$ is a subnormal subgroup of $\pi_1(N)$ and we have $\pi_1(\wti{M}_n)=f_*^{-1}(\pi_1(\wti{N}_n))$.  By our assumption $(*)$
we have 
\[ \rk\big(\pi_1(\wti{M}_n)\big)=\rk\big(\pi_1(\wti{N}_n)\big) \mbox{ for all $n$.}\]

Since $\phi$ is fibered it follows from Theorem~\ref{mainthm-dfv14} that $\rg(\pi_1(\wti{N}),\phi)=0$. By the above observation we have $\rg(\pi_1(\wti{M}),f^*\phi)=0$. It is a consequence of 
Theorem~\ref{mainthm-dfv14} that $f^*\phi$ is a fibered class of $\wti{M}$.

Similarly, it is a consequence of the above observations and of Proposition~\ref{prop:rank-fibered} that 
\[ \rk(\ker(f^*\phi))=\max\big\{ \hspace{-0.1cm}\rk\hspace{-0.1cm}\big(\pi_1\big(\wti{M})_n\big) - 1\,\big|\,n\in \N\big\}
=\max\big\{ \hspace{-0.1cm}\rk\hspace{-0.1cm}\big(\pi_1(\wti{N}_n)\big) - 1\,\big|\,n\in \N\big\}=\rk(\ker(\phi)).\]
It is now a consequence of Proposition~\ref{mainthm-fib} that   $f_*\colon \pi_1(\wti{M})\to \pi_1(\wti{N})$ is an isomorphism.

Next we show that $f_*\colon \pi_1(M)\to \pi_1(N)$ is already an isomorphism. We consider the following diagram 
\[ \xymatrix@R0.6cm{ 1\ar[r]&\pi_1(\wti{M})\ar[d]_-\cong^-{f_*}\ar[r]&\pi_1(M)\ar[d]^-{f_*}\ar[r]&\pi_1(M)/\pi_1(\wti{M})\ar[d]^-{f_*}_-\cong\ar[r]&1 \\
1\ar[r]&\pi_1(\wti{N})\ar[r]&\pi_1(N)\ar[r]&\pi_1(N)/\pi_1(\wti{N})\ar[r]&1.}\]
This diagram consists of maps between pointed sets (i.e.\ sets with distinguished elements).  The top and bottom sequence are exact in the category of pointed sets. The left and the rightmost maps are isomorphisms of pointed sets. The diagram evidently commutes. Thus it follows from the five-lemma that the middle map is also an isomorphism.

Summarizing we showed that  $f_* \colon \pi_1(M)\to \pi_1(N)$ is an isomorphism. Since $f \colon (M, \partial M) \to (N, \partial N)$ is a proper map, it follows from work of Waldhausen~\cite[Corollary~6.5]{Wa68}\cite[Theorem~13.16]{He76} and from Mostow-Prasad rigidity \cite{Mo68,Pr73} that $f$ is homotopic to a  homeomorphism.
\end{proof}

\section{The Heegaard genus and $\pi_1$-epimorphisms}\label{section:heegaard}

%
%
%
\subsection{Proof of Theorem~\ref{mainthm2}}

In this section we will provide the proof of Theorem~\ref{mainthm2}.
We start out with the following well-known lemma.

\begin{lemma}\label{lem:heegaard-genus}
Let $M$ be a manifold. Then the following hold:
\bn
\item $\rk(\pi_1(M))\leq h(M)$.
\item If $M$ is a fibered 3-manifold with connected fiber $F$, then 
\[ h(M)\,\leq \,b_1(F)+1.\]
\en
\end{lemma}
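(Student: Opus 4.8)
The two parts are logically independent, so I would prove them separately.

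\emph{Part (1).} Fix a handle decomposition of $M$ realizing $h(M)$: a single $0$-handle, exactly $h(M)$ one-handles, and some handles of index $\geq 2$. Let $H$ be the union of the $0$-handle and all the one-handles. Then $H$ is homotopy equivalent to a wedge of $h(M)$ circles, so $\pi_1(H)$ is free of rank $h(M)$, the one-handles giving a free basis. The plan is then to observe that the inclusion $H\hookrightarrow M$ is $\pi_1$-surjective: passing from $H$ to $M$ amounts to successively attaching handles of index $\geq 2$, and such a handle is glued along a sphere of dimension $\geq 1$, so by van Kampen each attachment only introduces a relation (index $2$) or leaves $\pi_1$ unchanged (index $\geq 3$), and never contributes a new generator. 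Hence $\pi_1(M)$ is a quotient of a free group of rank $h(M)$, which gives $\rk(\pi_1(M))\leq h(M)$.

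\emph{Part (2).} Here I would build an explicit handle decomposition of the mapping torus out of one of the fiber. Write $M$ as the mapping torus of the monodromy $\varphi\colon F\to F$, and fix a handle decomposition of $F$ with $c_k$ handles of index $k$. The key input is the standard product construction: viewing $M$ as an $F$-bundle over $S^1$ and using the cell structure of $S^1$ with one $0$-cell and one $1$-cell, each index-$k$ handle of $F$ gives rise to one index-$k$ handle and one index-$(k+1)$ handle of $M$, the monodromy affecting only the attaching maps and not the number or indices of the handles. Thus $M$ admits a handle decomposition with $c_k+c_{k-1}$ handles of index $k$; in particular it has a single $0$-handle (as $c_{-1}=0$ and $c_0=1$) and exactly $c_0+c_1$ one-handles. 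It then remains to minimize $c_0+c_1$ over handle decompositions of $F$. For $F$ closed of genus $g$ the standard decomposition has $c_0=1$, $c_1=2g$, $c_2=1$, so $M$ has one $0$-handle and $2g+1$ one-handles, giving $h(M)\leq 2g(F)+1$.

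\emph{Main obstacle.} The step that looks routine but genuinely needs care is the product construction for a \emph{twisted} bundle: one must verify that a handle decomposition of $F$ really induces one of $M$ with the asserted count, i.e.\ that the monodromy can be absorbed into the attaching maps without creating extra handles. I would make this precise either via a Morse function on $M$ subordinate to the fibration, whose critical points are those of a Morse function on $F$, each fiber-critical point of index $k$ producing critical points of index $k$ and $k+1$, or via the standard description of the handle decomposition of $X\times_\varphi S^1$ built from one of $X$. A secondary point to watch is the bookkeeping when $F$ has boundary: then $F$ retracts to a wedge of circles with $c_2=0$, and $c_0+c_1=1+(2g+b-1)=2g+b$ for a fiber of genus $g$ with $b$ boundary circles, so the clean bound $2g(F)+1$ corresponds to the closed (or single-boundary) case and I would check this against the form in which the lemma is applied.
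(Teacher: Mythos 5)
Your part (1) is the paper's argument: the union of the $0$-handle and the $h(M)$ one-handles carries a free group of rank $h(M)$ surjecting onto $\pi_1(M)$; your van Kampen justification of why the higher-index handles contribute no generators is the (routine) content behind the paper's ``it is thus clear''. For part (2) in the closed case you take a genuinely different route: the paper writes $M=F\times[0,4]/(x,0)\sim(\varphi(x),4)$ and exhibits an explicit Heegaard surface
\[ (\ol{F\sm(P\cup Q)})\times\{0\}\,\cup\,\partial P\times[0,2]\,\cup\,(\ol{F\sm(P\cup\varphi(Q))})\times\{2\}\,\cup\,\partial\varphi(Q)\times[2,4], \]
splitting $M$ into two handlebodies of genus $2g+1$, whereas you push a handle decomposition of the fiber through the mapping-torus construction. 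Both are standard; your version is more systematic and treats the closed and bounded cases uniformly, while the paper's gives the Heegaard surface explicitly. Your twisted-product step is sound and is exactly the point to make precise (trivialize the bundle over the complement of a point of $S^1$, then glue in $F\times I$ rel $F\times\partial I$, which contributes one $(k+1)$-handle per $k$-handle of $F$; the monodromy only changes attaching maps).

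Your flagged concern about the bounded case is justified and should not be argued away: if $F$ has genus $g$ and $b\geq 2$ boundary components, the bound $h(M)\leq 2g(F)+1$ is false. For $F$ a pair of pants and $M=F\times S^1$ one has $\rk(\pi_1(M))=\rk(F_2\times\Z)=3>1=2g(F)+1$, so $h(M)\geq 3$ by part (1). What is true, and what both your construction and the paper's construction actually produce, is $h(M)\leq \rk(\pi_1(F))+1$, i.e.\ $2g+1$ for $F$ closed and $2g+b$ for $F$ with $b\geq 1$ boundary components: the paper's handlebody $F\times[0,1]\cup P\times[1,3]\cup F\times[3,4]$ is a product $F\times I$ (a handlebody of genus $\rk(\pi_1(F))=2g+b-1$) plus one tube, hence has $2g+b$ one-handles, not $2g+1$ as claimed. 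The corrected bound is also the one the paper needs downstream: in Proposition~\ref{prop:heegaard-fibered} the upper bound is played off against the lower bound $\rk(\pi_1(M_n))=1+\rk(\pi_1(F))$, so $\rk(\pi_1(F))+1$ is exactly the right quantity, and with that replacement your argument proves the statement in full.
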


\begin{proof}
By definition we can endow $M$ with a handle decomposition with one zero-handle and $h(M)$ one-handles. It is thus clear that the free group on $h(M)$ generators surjects onto $\pi_1(M)$.  Therefore $\rk(\pi_1(M))\leq h(M)$. This concludes the proof of (1).

Now we turn to the proof of (2). 
If $M$ is fibered with fiber $F$, then we can write $M=F\times [0,4]/(x,0)\sim(\varphi(x),4)$.
First we consider the case that $M$ is closed.
 Pick two closed disks $P\subset \Sigma$ and $Q\subset \Sigma$ with $P\cap Q=\emptyset$ and $\varphi(Q)\cap P=\emptyset$. It is straightforward to see that 
\[ (\ol{F\sm (P\cup Q)})\times \{0\} \,\cup\, \partial P\times [0,2] \,\cup\,(\ol{F\sm (P\cup \varphi(Q)})\times \{2\}\,\cup\,\partial \varphi(Q)\times [2,4]\] 
splits $M$ into two handlebodies of genus $2g(F)+1=b_1(F)+1$. It follows that we can endow $M$ with a handle-decomposition  with one zero-handle and $2g+1=b_1(F)+1$ one-handles.
Now we consider the case that $M$ has non-empty boundary. We pick a closed disk $P\subset F$. Then the image $X$ of $F\times [0,1]\cup P\times [1,3]\cup F\times [3,4]$ in $M$ is homeomorphic to the handle body $F\times [-1,1]$ together with one extra one-handle.
Put differently, $X$ be written as a handlebody with one zero-handle and $b_1(F)+1$ one-handles. We can view $F$ as obtained from the disk by attaching $b_1(F)$ one-handles. It is now straightforward to see that $M$ can be built out of $X$  by attaching $b_1(F)$ two-handles.
It follows that $h(M)\leq b_1(F)+1$.
\end{proof}

The following proposition plays the r\^ole of Proposition~\ref{prop:rank-fibered}.

\begin{proposition}\label{prop:heegaard-fibered}
Let $M$ be a  3-manifold. Furthermore let $\phi\in H^1(M;\Z)=\hom(\pi,\Z)$ be a primitive fibered class with corresponding fiber $F$. Given $n\in \N$ we denote by $M_n$ the cover of $M$ corresponding to $\ker(\pi_1(M)\xrightarrow{\phi}\Z\to \Z/n)$.
For any $n$ we have
\[ h(M_n) \,\leq\, b_1(F)+1.\]
Furthermore, there exists an $n$ such that 
\[ h(M_n) \,=\, b_1(F)+1.\]
\end{proposition}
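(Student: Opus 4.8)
The plan is to prove the upper bound $h(M_n) \leq 2g(F)+1$ for all $n$ by observing that $M_n$ is itself a fibered $3$-manifold whose fiber is the same surface $F$. The key point is that the cover $M_n \to M$ corresponds to the kernel of $\pi_1(M) \xrightarrow{\phi} \Z \to \Z/n$, and since $\phi$ is the fibration class with fiber $F$, restricting to this index-$n$ subgroup does not change the fiber: the fibration $p \co M \to S^1$ pulls back along the $n$-fold cover $S^1 \to S^1$ to a fibration $p_n \co M_n \to S^1$ whose fiber is again $F$, only the monodromy changes from $\varphi$ to $\varphi^n$. Thus $M_n$ is fibered with connected fiber $F$, and part~(2) of Lemma~\ref{lem:heegaard-genus} immediately gives $h(M_n) \leq 2g(F)+1$. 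I would state this explicitly using the semidirect-product description $\pi_1(M) = \ll t \rr \ltimes_\mu \Gamma$ already set up in the proof of Proposition~\ref{prop:rank-fibered}, so that $\pi_1(M_n) = \ll t^n \rr \ltimes_\mu \Gamma$ manifestly corresponds to a fibration with fiber $F$.

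For the second, harder assertion---that equality $h(M_n) = 2g(F)+1$ holds for some $n$---the natural strategy is to combine the upper bound just established with a matching lower bound coming from $\rk(\pi_1(M_n))$ via part~(1) of Lemma~\ref{lem:heegaard-genus}. The chain would read $h(M_n) \geq \rk(\pi_1(M_n)) \geq \dim_{\F_p}(H_1(\pi_n;\F_p))$, and Proposition~\ref{prop:rank-fibered} supplies an $n$ (depending on a chosen prime $p$) for which $\dim_{\F_p}(H_1(\pi_n;\F_p)) = 1 + \rk(\ker \phi) = 1 + \rk(\Gamma) = 1 + \rk(\pi_1(F))$. The obstacle is that $1 + \rk(\pi_1(F))$ and $2g(F)+1$ are only equal when $F$ is closed: for a closed orientable surface of genus $g$ we have $\rk(\pi_1(F)) = 2g$, so $1 + \rk(\pi_1(F)) = 2g+1 = 2g(F)+1$ and the bounds pinch exactly. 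When $F$ has boundary, $\pi_1(F)$ is free of rank $2g+b-1$ for $b$ boundary components, and $1 + \rk(\pi_1(F)) = 2g+b$ falls short of $2g(F)+1 = 2g+1$ whenever $b > 1$, so the rank lower bound alone is insufficient.

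The main obstacle, therefore, is establishing the equality in the bounded-fiber case. I expect the fix is that the quantity $h(M_n)$ is being compared not against $\rk(\pi_1(M_n))$ but against a sharper invariant that remembers the boundary. One clean route is to exploit the Casson--Gordon Heegaard genus of the triad $(M_n; \emptyset, \partial M_n)$ mentioned in the definition of $h$, together with a direct handle-count showing the fibered splitting of genus $2g(F)+1$ from Lemma~\ref{lem:heegaard-genus}(2) is optimal for the pulled-back fibration once $n$ is large enough. Concretely, I would argue that for large $n$ the monodromy $\varphi^n$ becomes $\F_p$-homologically trivial (exactly the mechanism used in Proposition~\ref{prop:rank-fibered}), which forces $H_1(M_n;\F_p)$ to be large enough that no handle decomposition with fewer than $2g(F)+1$ one-handles can exist; the Mayer--Vietoris or the half-lives-half-dies computation for the fibered structure should pin $\dim_{\F_p} H_1(M_n;\F_p)$ at precisely $2g(F)+1$ for suitable $n$, closing the gap left by the rank estimate. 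I would single this homological lower bound as the step most in need of care, since it is where the boundary behavior of $F$ must be handled correctly and where the choice of $n$ (and of the prime $p$) must be coordinated.
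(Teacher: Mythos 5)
Your proof of the first inequality is exactly the paper's: $M_n$ fibers over $S^1$ with the same fiber $F$ and monodromy $\varphi^n$, so Lemma~\ref{lem:heegaard-genus}(2) applies. For the equality statement, however, your proposal does not close. The paper's own argument is precisely the ``natural strategy'' you describe and then abandon: choose $n$ as in Proposition~\ref{prop:rank-fibered} so that $\rk(\pi_1(M_n))=1+\rk(\pi_1(F))$, and pinch
$1+\rk(\pi_1(F))=\rk(\pi_1(M_n))\leq h(M_n)\leq 2g(F)+1$, using the identification $2g(F)+1=1+\rk(\pi_1(F))$. Your worry about fibers with boundary is legitimate, but you have the direction of the discrepancy backwards: for a compact orientable fiber with $b$ boundary components one has $\rk(\pi_1(F))=2g+b-1$, so $1+\rk(\pi_1(F))=2g+b$ \emph{exceeds} $2g(F)+1$ when $b>1$ rather than falling short of it. The consequence is not that the lower bound is too weak; it is that the upper bound $2g(F)+1$ of Lemma~\ref{lem:heegaard-genus}(2) cannot be taken literally when $b\geq 2$ (the handlebody constructed in its proof has $1+\rk(\pi_1(F))$ one-handles, not $2g+1$). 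With $2g(F)+1$ read as $1+\rk(\pi_1(F))$ throughout, the pinch closes immediately and no further work is needed.

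The replacement argument you sketch --- invoking the Casson--Gordon triad genus and trying to force $\dim_{\F_p}H_1(M_n;\F_p)$ up to $2g(F)+1$ by making $\varphi^n$ act trivially on $\F_p$-homology --- is not carried out, and as a route to a lower bound on $h$ it offers nothing beyond what you already set aside: the only lower bound on $h(M_n)$ available here is $h(M_n)\geq\rk(\pi_1(M_n))\geq\dim_{\F_p}H_1(M_n;\F_p)$, which is exactly the rank estimate from Proposition~\ref{prop:rank-fibered}. So the second assertion of the proposition remains unproved in your write-up; the fix is to restore the rank pinch and to reconcile the two constants $2g(F)+1$ and $1+\rk(\pi_1(F))$, not to seek a sharper homological bound.
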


\begin{remark}
If $M$ is closed and hyperbolic, then Souto~\cite[Theorem~1.1]{Sou08} showed that there exists an $m$ such that for every $n\geq m$ we have 
$ h(M_n) = 2g(F)+1=b_1(F)+1$. 
\end{remark}

\begin{proof}
We pick a fibration $p\colon M\to S^1$ with fiber $F$ and monodromy $\varphi\colon F\to F$ that corresponds to the given primitive fibered class $\phi$. Since $\phi$ is primitive the fiber $F$ is connected. For any $n$ the fibration $p$ gives rise to a fibration $p_n\colon M_n\to S^1$ with fiber $F$ and monodromy $\varphi^n$. The first statement is now a consequence of
Lemma~\ref{lem:heegaard-genus}.

By Proposition~\ref{prop:rank-fibered} there exists an $n$ such that 
\[ \rk(\pi_1(M_n))\,=\,1+\rk(\pi_1(F)).\]
It follows from Lemma~\ref{lem:heegaard-genus} that 
\[ 1+\rk(\pi_1(F))\,=\,\rk(\pi_1(M_n))\,\leq\, h(M_n)\,\leq\,
1+b_1(F)\,=\,1+\rk(\pi_1(F)).\]
Thus we obtain  that $h(M_n)=b_1(F)+1$.
\end{proof}

Let $M$ be a  3-manifold and let  $\phi\colon \pi_1(M)\to \Z$ be an epimorphism. 
We write $M_n$ for the finite cyclic cover corresponding to 
\[\ker(\pi_1(M)\xrightarrow{\phi}\Z\to \Z/n).\]
Following \cite{La06}  we refer to
\[ \hg(M,\phi):=\liminf_{n\to \infty} \smfrac{1}{n}(2h(M_n)-2))\]
as the \emph{Heegaard gradient} of $(M,\phi)$. 
The following proposition plays the r\^ole of Theorem~\ref{mainthm-dfv14}.

\begin{proposition}\label{prop:heegaard-gradient}
Let $M$ be a 3-manifold and let $\phi\co \pi_1(M)\to \Z$ be an epimorphism.
Then the following two statements are equivalent:
\bn
\item $\phi\in \hom(\pi_1(M),\Z)=H^1(M;\Z)$ is fibered,
\item $\hg(M,\phi)=0$.
\en
\end{proposition}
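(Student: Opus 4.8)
The plan is to prove the two implications separately, with the forward direction being essentially immediate from earlier work and the reverse direction being the substantive part.

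\textbf{The direction (1) $\Rightarrow$ (2).} Suppose $\phi$ is fibered with fiber $F$. Then Proposition~\ref{prop:heegaard-fibered} gives the uniform bound $h(M_n)\leq 2g(F)+1$ for every $n$, where $M_n$ is the cyclic cover corresponding to $\phi$ composed with reduction mod $n$. Since $g(F)$ is a fixed constant independent of $n$, the quantity $2h(M_n)-2$ is bounded above by the constant $4g(F)$, and therefore
\[ \hg(M,\phi)=\liminf_{n\to\infty}\tmfrac{1}{n}\big(2h(M_n)-2\big)\leq \liminf_{n\to\infty}\tmfrac{4g(F)}{n}=0. \]
As the Heegaard gradient is manifestly non-negative (each $h(M_n)\geq 1$), this forces $\hg(M,\phi)=0$. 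I would keep this paragraph short, since it is a direct consequence of Proposition~\ref{prop:heegaard-fibered}.

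\textbf{The direction (2) $\Rightarrow$ (1).} This is where the real content lies. The natural strategy is to reduce to Theorem~\ref{mainthm-dfv14}, which characterizes fiberedness via vanishing of the \emph{rank} gradient $\rg(\pi_1(M),\phi)=\liminf \tfrac{1}{n}\rk(\pi_n)$. The bridge between the two gradients is part~(1) of Lemma~\ref{lem:heegaard-genus}, which says $\rk(\pi_1(M_n))\leq h(M_n)$ for each cover. From this I would argue that $\hg(M,\phi)=0$ controls $\rg(\pi_1(M),\phi)$: comparing the two $\liminf$ expressions, the inequality $\rk(\pi_1(M_n))\leq h(M_n)\leq 2h(M_n)-2+c$ (taking care of the small-genus cases where $2h-2$ could be zero or negative) shows that if the Heegaard gradient vanishes then so does the rank gradient, i.e.\ $\rg(\pi_1(M),\phi)=0$. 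Then Theorem~\ref{mainthm-dfv14} immediately yields that $\phi$ is fibered, completing the proof.

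\textbf{The main obstacle.} The delicate point is the bookkeeping in passing from $h(M_n)$ to $2h(M_n)-2$ in the $\liminf$ comparison, together with the possibility that $h(M_n)$ is small (for instance $h(M_n)=1$, making $2h(M_n)-2=0$). One must check that the $-2$ shift and the factor of $2$ do not disturb the \emph{vanishing} of the gradient: since $\rk(\pi_1(M_n))\leq h(M_n)\leq 2h(M_n)$ whenever $h(M_n)\geq 1$, and $\tfrac{1}{n}(2h(M_n)-2)\to 0$ forces $\tfrac{1}{n}h(M_n)\to 0$ along the relevant subsequence, the rank gradient is squeezed to zero. The only genuinely subtle issue is ensuring the $\liminf$ is taken over the same cofinal family and that the constant shift is negligible in the limit; once that is verified the argument closes cleanly by invoking Theorem~\ref{mainthm-dfv14}. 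I expect no further difficulty, since all the hard geometric input (that vanishing rank gradient implies fiberedness) is already packaged in Theorem~\ref{mainthm-dfv14}.
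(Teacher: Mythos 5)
Your proof is correct and follows exactly the paper's argument: the forward direction is an immediate consequence of Proposition~\ref{prop:heegaard-fibered}, and the reverse direction passes through Lemma~\ref{lem:heegaard-genus} to deduce $\rg(\pi_1(M),\phi)=0$ and then invokes Theorem~\ref{mainthm-dfv14}. The extra bookkeeping you supply for the $-2$ shift and the factor of $2$ is sound and merely spells out what the paper leaves implicit.
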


Here the implication (1) $\Rightarrow$ (2) is an immediate consequence of Proposition~\ref{prop:heegaard-fibered}. 
For the implication (2) $\Rightarrow$ (1) note that 
if $\hg(M,\phi)=0$, then it follows from Lemma~\ref{lem:heegaard-genus} that $\rg(M,\phi)=0$, which in turn implies by Theorem~\ref{mainthm-dfv14} that $\phi$ is fibered. 

\begin{remark} 
The implication (2) $\Rightarrow$ (1) was first proved by M.\ Lackenby~\cite[Theorem~1.11]{La06} for closed hyperbolic 3-manifolds (see also \cite{Ren10,Ren14} for extensions).
The proof of M.\ Lackenby is significantly harder than the proof provided in \cite{DFV14} since the latter proof makes use of the results of D.\ Wise \cite{Wi09,Wi12a,Wi12b} to simplify many arguments.
\end{remark} 

The proof of Theorem~\ref{mainthm2} is now 
almost entirely identical to the proof of Theorem~\ref{mainthm}, we only need to replace the study of ranks of subnormal finite-index subgroups of $\pi_1(M)$ and $\pi_1(N)$ by the  Heegaard genera of the corresponding subregular finite-index covers of $M$ and $N$. In particular we need to replace
 Proposition~\ref{prop:rank-fibered} by 
Proposition~\ref{prop:heegaard-fibered} and we need to replace
 Theorem~\ref{mainthm-dfv14} by Proposition~\ref{prop:heegaard-gradient}.
We leave the straightforward and dull details to the reader.

\subsection{Proof of Proposition~\ref{mainprop3}}
Let $f\colon M\to N$ be a proper $\pi_1$-epimorphism map between two  aspherical 3-manifolds  with empty or toroidal boundary. We assume that $N$  is not a closed graph manifold. 
As in the proof of Theorem~\ref{mainthm} there exists a finite subregular cover $\wti{N}$ of $N$ which is fibered with fiber $F$ and which satisfies 
\[ \rk(\pi_1(\wti{N}))\,=\,1+2\rk(\pi_1(F)).\]
We denote by $\wti{M}$ the induced cover of $M$. The map $f$ gives rise to a proper map $f\colon \wti{M}\to \wti{N}$, which induces an epimorphism of fundamental groups.
It follows from  Lemma~\ref{lem:heegaard-genus} that
\[ h(\wti{M})\,\geq \, \rk(\pi_1(\wti{M}))\,\geq \,\rk(\pi_1(\wti{N}))\,=\,1+\rk(\pi_1(F))\,=\,1+b_1(F)\,\geq\,h(\wti{N}).\]
This concludes the proof of Proposition~\ref{mainprop3}.

\section{Examples}\label{sec:examples}

The goal of this short section is to show the existence of $\pi_1$-epimorphisms $f\colon M\to N$  between two aspherical closed 3-manifolds where the inequality $h(M) < h(N)$ can be arbitrarily large.

\begin{proposition}
Given any $n\in \N$ there exist closed orientable aspherical 3-manifolds $M_n$ and $N_n$ such that there is a $\pi_1$-epimorphism $f\colon M_n\to N_n$ while $h(N_n) > h(M_n) + n$.
\end{proposition}

\begin{proof}
Given $n\in \N$, there exists a closed aspherical 3-manifold $N_n$ such that $h(N_n) - \rk(\pi_1(N_n) \geq n + 3$. The manifold $N_n$ can be hyperbolic or have a non-trivial JSJ with a hyperbolic piece by \cite{Li013}, or it can be a graph manifold by \cite{SW07}.

Given such a manifold $N_n$ there is a $\pi_1$-epimorphism $f \colon X_{n} \to N_n$ where $X_n$ is the connected sum of $\rk(\pi_1(N_n))$ copies of $S^1 \times S^2$.  The manifold $X_n$ can be obtained by taking the double of a handlebody $H_n$ of genus $\rk(\pi_1(N_n))$. Then the Heegaard surface $\partial H_n$ is unique up to isotopy by~\cite{Wa68b}. Let $k \subset X_n$ be a knot lying on $\partial H_n$ and which meets essentially the boundary of each meridian disk of $H_n$. Since $k$ can be put in a 1-bridge position with respect to $\partial H_n$, by~\cite[Theorem 1.6]{Do91} the exterior $E(k) = X_n \setminus \mathcal{N}(k)$ is a compact irreducible 3-manifold. By construction $h(E(k)) = \rk(\pi_1(N_n)) + 1$, see~\cite[Remark (2)]{Do91}. Let $W(k)$ be any Whitehead double of $k$. Then the exterior $E(W(k))$ is obtained from $E(k)$ by gluing along $\partial E(k)$ the exterior $W$ of the Whitehead link. Since $h(W) = 2$, by amalgamating the Heegaard splittings of $E(k)$ and $W$ one gets a Heegaard splitting for $E(W(k))$ of genus  $h(E(k)) + h(W) - 1 = h(E(k)) + 1$, see~\cite{La04}. It follows that $h(E(W(k))) \leq h(E(k)) + 1= \rk(\pi_1(N_n)) + 2$. 
By~\cite{Ha82}  there are only finitely many boundary slopes on $\partial E(W(k))$. This implies in particular that for almost all slopes $\alpha$ on $\partial E(W(k))$ the Dehn fillings of $E(W(k))$ produces a closed aspherical 3-manifold $M$.  Moreover there is a degree-one map $g \colon M \to N_n$, since $W(k)$ is null-homotopic in $X_n$, see~\cite{BW96}. Therefore there is a $\pi_1$-surjective map  $f \circ g \colon M \to N_n$ while $h(M) \leq h(E(W(k))) \leq \rk(\pi_1(N_n)) + 2 \leq h(N_n) -(n + 1)$.
\end{proof}

To the best of our knowledge the following conjecture remains open:

\begin{conjecture}\label{conj:h-degree-one-map}
If there is a degree-one map $f\colon M\to N$  between two aspherical 3-manifolds, does the inequality $h(M)\geq h(N)$ hold?
\end{conjecture}

\end{document}